\newtheorem{theorem}{Theorem}
\newtheorem{corollary}{Corollary}
\newtheorem{problem}{Problem}
\newcommand{\NN}{\mathbb{N}}
\newcommand{\RR}{\mathbb{R}}
\newcommand{\rmd}{\,\mathrm{d}}
\newcommand{\ce}{\coloneqq}
\newcommand{\ec}{\eqqcolon}
\newcommand{\pd}[2]{\frac{\partial #1}{\partial #2}}
\newcommand{\lap}{\Delta}
\newcommand{\rot}{\nabla\times}
\renewcommand{\div}{\nabla\cdot}
\newcommand{\grad}{\nabla}
\newcommand{\x}{\mathbf{x}}
\newcommand{\y}{\mathbf{y}}
\newcommand{\bnu}{\boldsymbol{\nu}}
\newcommand{\eps}{\varepsilon}
\newcommand{\bphi}{\boldsymbol{\phi}}
\newcommand{\E}{\mathbf{E}}
\renewcommand{\H}{\mathbf{H}}
\renewcommand{\P}{\mathbf{P}}
\newcommand{\G}{\mathbf{G}}
\renewcommand{\l}{ {\boldsymbol{\lambda} }}
\newcommand{\oeps}{{\bar{\eps}}}
\newcommand{\oE}{{\bar{\E}}}
\newcommand{\ol}{{\bar{\l}}}
\newcommand{\ih}{{\Pi_h}}
\newcommand{\rh}{{r_h}}
\newcommand{\maxwell}{\mathscr{D}}
\newcommand{\adjoint}{\mathscr{A}}
\renewcommand{\th}{\mathcal{T}_h}
\newcommand{\It}{\mathcal{I}_\tau}
\newcommand{\ot}{ {\Omega_T} }
\newcommand{\gt}{ {\Gamma_T} }
\newcommand{\diam}{\operatorname{diam}}
\newcommand{\vht}{V_h}
\newcommand{\uht}{U_h}
\newcommand{\abs}[1]{\left\lvert #1 \right\rvert}
\newcommand{\norm}[1]{\left\lVert{\textstyle #1 }\right\rVert}
\newcommand{\sca}[2]{\left\langle{\textstyle #1},\,{\textstyle #2}\right\rangle}
\newcommand{\jm}[2]{\left\{\textstyle{ #1 }\right\}_{\mathrm{#2}}}
\newcommand{\aj}[2]{\left[\textstyle{ #1 }\right]_{\mathrm{#2}}}
\begin{document}
\author{John Bondestam Malmberg}
\address{Department of Mathematical Sciences\\ Chalmers University of Technology and University of Gothenburg\\ SE-412 96 Gothenburg, Sweden}
\email{john.bondestam.malmberg@chalmers.se}
\date{\today}

\begin{abstract}
We present a posteriori error estimates for finite element approximations in a minimization approach to a coefficient inverse problem. The problem is that of reconstructing the dielectric permittivity $\eps = \eps(\x)$, $\x\in\Omega\subset\RR^3$, from boundary measurements of the electric field. The electric field is related to the permittivity via Maxwell's equations. The reconstruction procedure is based on minimization of a Tikhonov functional where the permittivity, the electric field and a Lagrangian multiplier function are approximated by peicewise polynomials. Our main result is an estimate for the difference between the computed coefficient $\eps_h$ and the true minimizer $\eps$, in terms of the computed functions.
\end{abstract}

\title[Error estimation in reconstruction of dielectric permittivity]{A posteriori error estimation in a finite element method for reconstruction of dielectric permittivity}
\maketitle

\section{Introduction} \label{intro}
In this note we study an adaptive finite element method for the reconstruction of a dielectric permittivity function $\eps=\eps(\x)$, $\x\in\Omega$, where $\Omega\subset\RR^3$ is a bounded domain with (piecewise) smooth boundary $\Gamma$. This is a coefficient inverse problem (CIP) for Maxwell's equations, where the dielectric permittivity function $\eps$, acting as the coefficient in the equations, characterizes an inhomogeneous, isotropic, non-magnetic, non-conductive medium in $\Omega$. Possible applications include detection of explosives in airport security and detection of land mines.

The method studied is based on minimization of a Tikhonov functional, where the functions involved are approximated by piecewise polynomials. It is intended as a second stage in a two-stage numerical procedure for the reconstruction of a dielectric permittivity. On the first stage, described in \cite{bk12, bk12b}, a good initial approximation $\eps_0$ of the dielectric permittivity function is obtained by a globally convergent method. This initial approximation is then refined on the second stage.

The version of the second stage considered here was introduced in \cite{m14}. Another version was studied theoretically and numerically in \cite{b11, bkk10, bk10a, btkm14, btkm14b}. There were two main reasons for introducing the new version of the second stage in \cite{m14}. The first reason was to handle a discrepancy between theory and implementation which was present in the previous version. This discrepancy was primarily due to the fact that the dielectric permittivity was approximated by a piecewise constant function, while the theory required higher regularity. In spite of that discrepancy, reasonable reconstructions were obtained, but it remained to be seen whether the new version of \cite{m14} could produce even more accurate reconstructions.

The second reason to introduce the version of \cite{m14} was to incorporate the divergence free condition for the electric displacement directly into the differential equation, without having to introduce an additional stabilizing penalty term as was done in \cite{b11, bkk10, bk10a, btkm14, btkm14b}.

In \cite{b11}, an a posteriori error estimate for a Lagrangian functional was derived. A similar estimate was given in \cite{m14}, but there the amount of detail provided in the proof was, for the sake of brevity, kept to a minimum. Here we give the fully detailed proof of that estimate. Moreover, we extend the error analysis also to include a posteriori error estimation for the Tikhonov functional, as well as for the permittivity function itself. The arguments which we use here could easily be adapted to obtain such estimates also for the original version of the second stage considered in \cite{b11, bkk10, bk10a, btkm14, btkm14b}.

The remaining part of this note is structured as follows: In the next section we present the mathematical formulations of the direct and inverse problems and present the basic results prior to discretization of the problems. In Section~\ref{error_analysis} we state the finite element formulations and perform the error analysis. Some concluding remarks are given in Section~\ref{conclusion}.

\section{The direct and inverse problems} \label{dir_inv}
Before proceeding with the mathematical statement of the problem, we introduce some notation. For the bounded domain $\Omega\subset \RR^3$ with boundary $\Gamma$, we write $\ot\ce \Omega\times(0,\,T)$ and $\gt\ce\Gamma\times(0,\,T)$, where $T>0$ is a (sufficiently large) fixed time. If $X\subset\RR^n$, $n\in\NN$, is a domain, we define the norm $\norm{\cdot}_{X,\,m}\ce\norm{\cdot}_{H^m(X)}$ and corresponding inner product $\sca{\cdot}{\cdot}_{X,\,m}\ce\sca{\cdot}{\cdot}_{H^m(X)}$, where $H^m(X)$ is the $L_2$-based Sobolev space of order $m$ over $X$, with respect to the usual Lebesgue measure. To simplify notation, we will drop the index $m$ whenever it is zero.

Let $V^\eps\ce H^3(\Omega)$. We define the set of admissible dielectric permittivity functions
\begin{equation} \label{admissible}
U^\eps\ce\{v\in V^\eps: 1 \leq v(\x) \leq \eps_{\mathrm{max}}~\forall \x\in\Omega,~v\rvert_\Gamma\equiv1,~\grad v\rvert_\Gamma\equiv 0\}
\end{equation}
for some known but not necessarily small upper bound $\eps_{\mathrm{\max}}$. The set $U^\eps$ is defined to describe a heterogeneous medium in $\Omega$, immersed in a constant background with permittivity 1 in $\RR^3\setminus\Omega$.

Under the assumption that $\eps\in U^\eps$ we consider Maxwell's equations for an isotropic, non-magnetic, non-conductive medium in $\Omega$:
\begin{align}
\pd{(\mu\H)}{t} + \rot\E &= 0 && \text{in } \ot, \label{mw1}\\
\pd{(\eps\E)}{t} - \rot\H &= 0 && \text{in } \ot, \label{mw2}\\
 \div(\mu\H) = \div(\eps\E) &= 0 && \text{in } \ot, \label{mw3}
\end{align}
where $\H=\H(\x,\,t)$ and $\E=\E(\x,\,t)$, $(\x,\,t)\in\ot$, denote the magnetic and electric fields, respectively, and $\mu>0$ is the constant magnetic permeability. By scaling, we may assume that $\mu=1$.

To obtain an equation involving only $\eps$ and $\E$, we combine the curl of \eqref{mw1} and derivative of \eqref{mw2} with respect to $t$ to obtain the second order equation
\begin{align*}
\eps\pd{^2\E}{t^2} + \rot(\rot\E) &= 0 && \text{in } \ot.
\end{align*}
To incorporate \eqref{mw3} we proceed as in \cite{m14} to expand $\rot(\rot\E)=-\lap\E + \grad(\div\E)$ and use
\begin{align*}
\div\E = \div\left(\frac{\eps\E}{\eps}\right) = \frac{\div(\eps\E)}{\eps} - \frac{\grad\eps\cdot\E}{\eps},
\end{align*}
where the term $\div(\eps\E)/\eps$ vanishes in view of \eqref{mw3}.

Thus, after completing with boundary and initial conditions, we obtain the system
\begin{equation} \label{maxwells}
\begin{aligned}
&\eps\pd{^2\E}{t^2}-\lap\E - \grad\left(\frac{\grad\eps\cdot\E}{\eps}\right) = 0 &&\text{in } \ot,\\
&\pd{\E}{\bnu} = \P &&\text{on } \gt, \\
&\E(\cdot,\,0) = \pd{\E}{t}(\cdot, \,0) = 0 &&\text{in } \Omega,
\end{aligned}
\end{equation}
where $\pd{}{\bnu}=\bnu\cdot\grad$, $\bnu$ denotes the outward unit normal on $\Gamma$, and $\P\in[L_2(\gt)]^3$ is given Neumann data (see Section~4 of \cite{btkm14} for details). For well-posedness of problems of this class, we refer to \cite{l85}.

The mathematical statement of the coefficient inverse problem is:

\begin{problem} \label{cip}
Given time-resolved boundary observations $\G\in[L_2(\gt)]^3$ of the electric field, determine $\eps\in U^\eps$ such that $\E = \G$ on $\gt$.
\end{problem}

The observations $\G$ represents either experimental or (partially) simulated data, see \cite{btkm14}.

Uniqueness of the solution of coefficient inverse problems of this type is typically obtained via the method of Carleman estimates \cite{bk81}. Examples where this method is applied to inverse problems for Maxwell's equations can be found in, for example, \cite{k86}, \cite{bcs12} for simultaneous reconstruction of two coefficients, and \cite{li05, ly07} for bi-isotropic and anisotropic media.
However, this technique requires non-vanishing initial conditions for the underlying partial differential equation, which is not the case here. Thus, currently, uniqueness of the solution for the problem we study is not known. For the purpose of this work, we will assume that uniqueness holds. This assumption is justified by the numerical results presented in \cite{btkm14, btkm14b}.

We introduce the space $V^\mathrm{dir}\ce\{\mathbf{v}\in[H^1(\ot)]^3:\mathbf{v}(\cdot,\,0)=0\}$ for solutions to the direct problem, and $V^\mathrm{adj}\ce\{\mathbf{v}\in[H^1(\ot)]^3:\mathbf{v}(\cdot,\,T)=0\}$ for adjoint solutions. Both spaces are equipped with the usual norm and inner product on $[H^1(\ot)]^3$.
Then, by multiplying the first equation in \eqref{maxwells} by a test function $\bphi\in V^\mathrm{adj}$ and integration over $\ot$, we obtain, after integration by parts,
\begin{equation} \label{weakmaxwell}
\begin{aligned}
0 &=
 -\sca{\eps\pd{\E}{t}}{\pd{\bphi}{t}}_\ot
 + \sca{\eps\pd{\E}{t}(\cdot,\,T)}{\bphi(\cdot,\,T)}_\Omega
 - \sca{\eps\pd{\E}{t}(\cdot,\,0)}{\bphi(\cdot,\,0)}_\Omega \\&\quad
 + \sca{\grad\E}{\grad\bphi}_\ot 
 - \sca{\pd{\E}{\bnu}}{\bphi}_\gt
 + \sca{\frac{\grad\eps\cdot\E}{\eps}}{\div\bphi}_\ot
 - \sca{\frac{\grad\eps\cdot\E}{\eps}}{\bnu\cdot\bphi}_\gt\\
&= 
 -\sca{\eps\pd{\E}{t}}{\pd{\bphi}{t}}_\ot
 + \sca{\grad\E}{\grad\bphi}_\ot 
 + \sca{\frac{\grad\eps\cdot\E}{\eps}}{\div\bphi}_\ot
 - \sca{\P}{\bphi}_\gt \\
& \ec \maxwell(\eps,\,\E,\,\bphi),
\end{aligned}
\end{equation}
where the second equality holds because $\bphi(\cdot,\,T)=0$, $\pd{\E}{t}(\cdot,\,0)=0$, $\pd{\E}{\bnu}=\P$ on $\gt$, and $\grad\eps=0$ on $\gt$. This leads to the following weak description of the electric field:
\begin{problem} \label{weakdirect}
Given $\eps\in U^\eps$, determine $\E\in V^\mathrm{dir}$ such that $\maxwell(\eps,\,\E,\,\bphi)=0$ for every $\bphi\in V^\mathrm{adj}$.
\end{problem}

Let $\E_\eps\in V^\mathrm{dir}$ denote the solution to Problem~\ref{weakdirect} for a given $\eps\in U^\eps$. We can then define the Tikhonov functional $F\colon U^\eps\to \RR_+$,
\begin{equation} \label{tikhonovfunctional}
F(\eps)=F(\eps,\,\E_\eps)\ce \frac{1}{2}\norm{(\E_\eps - \G)z_\delta}_\gt^2 + \frac{\alpha}{2}\norm{\eps - \eps_0}_\Omega^2,
\end{equation}

\begin{center}
  \begin{figure}
    \includegraphics[width=0.9\textwidth]{./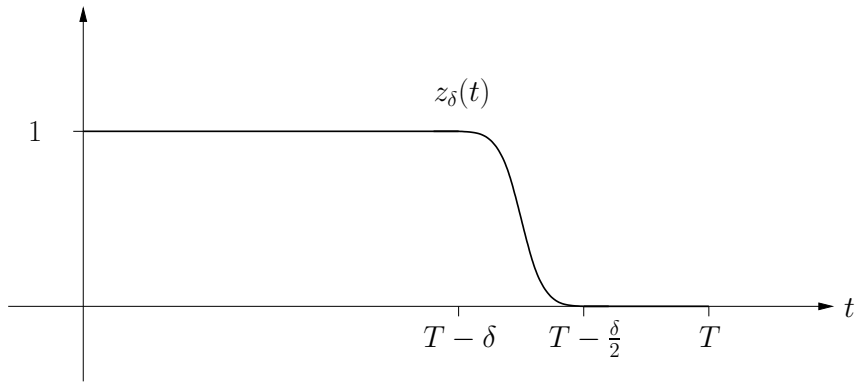}
    \caption{Schematic illustration of the cut-off function $z_\delta$ appearing in the Tikhonov functional \eqref{tikhonovfunctional}.}\label{zdelta}
  \end{figure}
\end{center}
where $\alpha>0$ is a regularization parameter and $z_\delta=z_\delta(t)\in C^\infty([0,\,T])$ is a cut-off function for the data, dropping from a constant level of 1 to a constant level of 0 within the small interval $(T-\delta,\,T-\delta/2)$, $\delta \ll T$, as schematically shown in Figure \ref{zdelta}. The function $z_\delta$ is introduced to ensure data compatibility in the adjoint problem arising in the minimization of \eqref{tikhonovfunctional}.

How to choose the regularization parameter $\alpha$ with respect to the level of noise in the data is a widely studied topic. Several methods exist, examples are the (generalized) discrepancy principle \cite{tgsy95} and iterative methods \cite{bks11}. For the results presented here, we regard $\alpha$ as a fixed parameter.

As remarked before, the initial approximation $\eps_0$ is obtained using the globally convergent method, as described in, for instance \cite{bk12b}. This means in particular that if $\eps_0$ is sufficiently close to an ideal solution $\eps^*$, corresponding to noiseless data $\G^*$, and if the regularization parameter $\alpha$ is chosen appropriately with respect to the level of noise in the data $\G$, then by Theorem~3.1 of \cite{bkk10}, the Tikhonov functional $F$ is strongly convex in a neighborhood $\mathcal{N}\subset V^\eps$ of $\eps_0$. If so, then in particular there exists a constant $c>0$ such that for every $\eps_1$, $\eps_2\in\mathcal{N}\cap U^\eps$, 
\begin{equation} \label{convexity}
c\norm{\eps_1 - \eps_2}^2_{V^\eps} \leq F'(\eps_1;\,\eps_1 - \eps_2) - F'(\eps_2;\,\eps_1 - \eps_2),
\end{equation}
where $F'(\eps;\,\oeps)$ denotes the Fr\'echet derivative of $F$ at $\eps$, acting on $\oeps$.

Throughout the remaining part of this text we will assume that the hypothesis of Theorem~3.1 of \cite{bkk10}, and hence strong convexity, holds. Then we may seek a minimizer $\eps\in U^\eps$ of $F$ by applying any gradient based method (such as steepest descent, quasi-Newton, or conjugate gradient), starting from $\eps_0$.

Such an approach requires that we compute the Fr\'echet derivative of $F$, which is complicated since it involves the implicit dependence of $\E_\eps$ upon $\eps$. To simplify the analysis, in the spirit of optimal control (see for example \cite{bkr00, kl10} for the general theory and some specific examples), we introduce the Lagrangian
\begin{equation*} \label{lagrangian}
L(u)\ce F(\eps,\,\E) + \maxwell(\eps,\,\E,\,\l),
\end{equation*}
where $u=(\eps,\,\E,\,\l)\in U \ce U^\eps\times V^\mathrm{dir}\times V^\mathrm{adj} \subset V\ce V^\eps\times V^\mathrm{dir}\times V^\mathrm{adj}$, $F(\eps,\,\E)$ was defined in \eqref{tikhonovfunctional}, and $\maxwell(\eps,\,\E,\,\l)$ was defined in \eqref{weakmaxwell}.

We can now minimize $F$ over $U^\eps$ by minimizing $L$ over $U$. With the strong convexity as above, this would imply that we solve
\begin{problem} \label{minlag}
Find $u\in U$ such that $L'(u;\,v)=0$ for every $v\in V$.
\end{problem}
Again we use the notation $L'(u;\,v)$ for the Fr\'echet derivative of $L$ at $u$, acting on $v$. It can be shown (see Proposition 1 of \cite{m14}) that
\begin{equation*}
L'(u;\,v)=\pd{L}{\eps}(u;\,\oeps) + \pd{L}{\E}(u;\,\oE) + \pd{L}{\l}(u;\,\ol),
\end{equation*}
where $u=(\eps,\,\E,\,\l)\in U$, $v=(\oeps,\,\oE,\,\ol)\in V$, and
\begin{equation}
\begin{aligned} \label{dl}
\pd{L}{\eps}(u;\,\oeps) &\ce \alpha\sca{\eps-\eps_0}{\oeps}_\Omega
 - \sca{\pd{\E}{t}\cdot\pd{\l}{t}}{\oeps}_\ot
 + \sca{(\div\l)\E}{\grad\left(\frac{\oeps}{\eps}\right)}_\ot,\\
 \pd{L}{\E}(u;\,\oE) &\ce \sca{(\E-\G)z_\delta^2}{\oE}_\gt
 - \sca{\eps\pd{\l}{t}}{\pd{\oE}{t}}_\ot
 + \sca{\grad\l}{\grad\oE}_\ot\\ &\qquad
 + \sca{\frac{\div\l}{\eps}\grad\eps}{\oE}_\ot  \ec \adjoint(\eps,\,\l,\,\oE),\\
\pd{L}{\l}(u;\,\ol) &= \maxwell(\eps,\,\E,\,\ol).
\end{aligned}
\end{equation}

In particular, we note that the solution $u=(\eps,\,\E,\,\l)$ to Problem~\ref{minlag} must satisfy $\maxwell(\eps,\,\E,\,\ol)=0$ for every $\ol\in V^\mathrm{adj}$ and $\adjoint(\eps,\,\l,\,\oE) = 0$ for every $\oE\in V^\mathrm{dir}$. The former means that $\E$ solves Problem~\ref{weakdirect} and the latter that $\l$ solves the following adjoint problem:
\begin{problem} \label{weakadjoint}
Given $\eps\in U^\eps$, determine $\l\in V^\mathrm{adj}$ such that $\adjoint(\eps,\,\l,\,\bphi)=0$ for every $\bphi\in V^\mathrm{dir}$.
\end{problem}

The functional $\adjoint$ in Problem~\ref{weakadjoint} was defined in \eqref{dl}. The problem can be seen as a weak analogue of the following system, adjoint to \eqref{maxwells}:
\begin{equation*}
\begin{aligned}
&\eps\pd{^2\l}{t^2} - \lap\l - \frac{\div\l}{\eps}\grad\eps = 0 && \text{in }  \ot,\\
&\pd{\l}{\bnu}=-(\E - \G)z_\delta^2 && \text{on } \gt,\\
&\l(\cdot,\,T) = \pd{\l}{t}(\cdot,\,T) = 0 && \text{in } \Omega. 
\end{aligned}
\end{equation*}

These observations will be used in the error analysis to be described below. But first we shall make some remarks concerning the relation between the Fr\'echet derivative of Tikhonov functional and that of the Lagrangian.

Let $u_\eps = (\eps,\,\E_\eps,\,\l_\eps)$ be the element of $U$ obtained by taking $\E_\eps$ as the solution to Problem~\ref{weakdirect} and $\l_\eps$ as the solution to Problem~\ref{weakadjoint} for the given $\eps\in U^\eps$. Then, under assumption of sufficient stability of the weak solutions $\E_\eps$ and $\l_\eps$ with respect to $\eps$, the observation that
\begin{equation*}
F(\eps) = F(\eps,\,\E_\eps) = F(\eps,\,\E_\eps) + \maxwell(\eps,\,\E_\eps,\,\l_\eps) = L(u_\eps),
\end{equation*}
(as $\maxwell(\eps,\,\E_\eps,\,\l_\eps)=0$) leads to 
\begin{equation} \label{dj2dl}
F'(\eps;\,\cdot) = \pd{L}{\eps}(u_\eps;\,\cdot).
\end{equation}
Estimate \eqref{convexity} and identity \eqref{dj2dl} will play an important role in the error analysis for the Tikhonov functional and for the coefficient.

\section{Finite element formulations and error analysis} \label{error_analysis}
In this section we will give finite element formulations for discretizing Problems~\ref{weakdirect}, \ref{minlag} and \ref{weakadjoint}. After that we will turn to the error analysis. We begin by defining finite-dimensional analogues of the spaces $V^\eps$, $V^\mathrm{dir}$, $V^\mathrm{adj}$, and $V$, as well as subsets corresponding to $U^\eps$ and $U$.

Let $\th\ce\{K\}$ be a triangulation of $\Omega$ and let $\It$ be a uniform partition of $(0,\,T)$ into subintervals $(t_k,\,t_{k+1}]$, $t_k=k\tau$, $k=0,\,\ldots,\,N_\tau$, of length $\tau=T/N_\tau$. With $\th$ we associate a mesh-function $h=h(\x)$ such that $h(\x)=\diam(K)$ for $\x\in K\in \th$. On these meshes we define\footnote{Observe that the dependence on the step size $\tau$ in time is not explicitly included in the notation for the finite-dimensional spaces. This is justified by the fact that $\tau$ should be selected with regard to $h$ in accordance with the Courant-Friedrichs-Lewy condition.}
\begin{equation*}
\begin{aligned}
V_h^\eps & \ce \{v\in V^\eps: v\rvert_K\in P^q(K)~\forall K\in \th\},\\
U_h^\eps & \ce V_h^\eps\cap U^\eps,\\
\vht^\mathrm{dir}& \ce \{v\in V^\mathrm{dir}:v\rvert_{K\times I}\in [P^1(K)]^3\times P^1(I)~\forall K\in\th~\forall I\in\It\},\\
\vht^\mathrm{adj}& \ce \{v\in V^\mathrm{adj}:v\rvert_{K\times I}\in [P^1(K)]^3\times P^1(I)~\forall K\in\th~\forall I\in\It\},\\
\vht &\ce V_h^\eps \times\vht^\mathrm{dir}\times\vht^\mathrm{adj},\\
\uht &\ce U_h^\eps \times\vht^\mathrm{dir}\times\vht^\mathrm{adj},
\end{aligned}
\end{equation*}
where $P^n(X)$ denotes the space of polynomials of degree at most $n\in\NN$ over $X$, and the degree $q$ used in the finite-dimensional analogue $V_h^\eps$ of $V^\eps$ is at least 1.

Using these spaces we can state finite element versions of Problems~\ref{weakdirect} and \ref{weakadjoint} as Problem~\ref{femdirect} and Problem~\ref{femadjoint}, respectively, as follows:

\begin{problem}\label{femdirect}
Given $\eps\in U^\eps$, determine $\E_h\in \vht^\mathrm{dir}$ such that $\maxwell(\eps,\,\E_h,\,\bphi_h)=0$ for every $\bphi_h\in \vht^\mathrm{adj}$.
\end{problem}

\begin{problem}\label{femadjoint}
Given $\eps\in U^\eps$, determine $\l_h\in \vht^\mathrm{adj}$ such that $\adjoint(\eps,\,\l_h,\,\bphi_h)=0$ for every $\bphi\in \vht^\mathrm{dir}$.
\end{problem}

The finite-dimensional analogue for Problem~\ref{minlag} is:
\begin{problem} \label{femminlag}
Find $u_h=(\eps_h,\,\E_h,\,\l_h)\in \uht$ such that $L'(u_h,\,v)=0$ for every $v\in \vht$.
\end{problem}
The same remark that was made in conjunction with Problem~\ref{minlag} is also valid here: it holds that $\E_h$ solves Problem~\ref{femdirect} and $\l_h$ solves Problem~\ref{femadjoint} for $\eps=\eps_h$.

We will now focus on estimations of the difference between the solution to Problem~\ref{minlag} and Problem~\ref{femminlag}. We begin by introducing some additional notation. For $v=(\eps,\,\E,\,\l)\in V$ we denote (with some slight abuse of notation) its interpolant in $\vht$ by 
\begin{align*}
\ih v= (\ih\eps,\,\ih\E,\,\ih\l),
\end{align*}
and the interpolation residual by 
\begin{align*}
\rh v = v-\ih v = (\rh\eps,\, \rh\E,\,\rh\l).
\end{align*}
 We will also need to consider jumps of discontinuous functions over $\th$ and $\It$. Let $K_1$, $K_2\in\th$ such that $\partial K_1\cap\partial K_2=e\neq\varnothing$. For $\x\in e$ we define
\begin{equation*}
\jm{v}{s}(\x)\ce\lim_{\y\to\x,\,\mathbf{y}\in K_1}v(\y) + \lim_{\y\to\x,\,\mathbf{y}\in K_2}v(\y),
\end{equation*}
so that in particular if $v=w\bnu$, where $w$ is piecewise constant on $\th$ and $\bnu$ is the outward unit normal, then $\jm{v}{s}=\jm{w\bnu}{s}=(w\bnu)\rvert_{K_1} + (w\bnu)\rvert_{K_2}$ is the normal jump across $e$. We extend $\jm{\cdot}{s}$ to every edge in $\th$ by defining $\jm{v}{s}(\x)=0$ for $\x\in K\cap\Gamma$, $K\in \th$. The corresponding maximal jump is defined by
\begin{equation*}
\aj{v}{s}(\x) \ce \max_{\y\in\partial K}\abs{\jm{v}{s}(\y)}, \quad \x\in K\in \th.
\end{equation*}

For jumps in time, we define
\begin{equation*}
\jm{v}{t}(t_k)\ce\begin{cases} {\displaystyle \lim_{s\to0+}}\big(v(t_k+s)-v(t_k-s)\big), & k=1,\,\ldots,\,N_\tau-1,\\
0 & k=0,\,N_\tau,\end{cases}
\end{equation*}
and
\begin{equation*}
\aj{v}{t}(t)\ce 
\max\{\abs{\jm{v}{t}(t_k)},\,\abs{\jm{v}{t}(t_{k+1})}\}
\quad t\in(t_k,\,t_{k+1}).
\end{equation*}


In the theorems and proofs to be presented, we will frequently use the symbols $\approx$ and $\lesssim$ to denote approximate equality and inequality, respectively, where higher order terms (with respect to mesh-size or errors) are neglected.

We are now ready to present the first a posteriori error estimate, an estimate for the Lagrangian. The theorem was first presented in \cite{m14}, but with only a very brief proof. We will here give the full details of the proof. Let us start by recalling the theorem:

\begin{theorem} (A posteriori error estimate for the Lagrangian.) \label{ape_lag}
Let $u=(\eps,\,\E,\,\l)\in U$ be the solution to Problem~\ref{minlag} and $u_h=(\eps_h,\,\E_h,\,\l_h)\in\uht$ be the solution to Problem~\ref{femminlag}. Then there exists a constant $C$, which does not depend on $u$, $u_h$, $h$, or $\tau$, such that 
\begin{equation*}
\begin{aligned}
\abs{L(u) - L(u_h)} &\lesssim C\left( \sca{\abs{R_\eps}}{h\abs{\aj{\pd{\eps_h}{\bnu}}{s}}}_\Omega \right.\\
&\qquad + \left. \sca{R_{\l,\,\Omega}}{\tau\abs{\aj{\pd{\E_h}{t}}{t}} + h\abs{\aj{\pd{\E_h}{\bnu}}{s}}}_\ot\right.\\
&\qquad + \left. \sca{R_{\l,\,\Gamma}}{\tau\abs{\aj{\pd{\E_h}{t}}{t}} + h\abs{\aj{\pd{\E_h}{\bnu}}{s}}}_\gt\right.\\
&\qquad + \left. \sca{R_{\E,\,\Omega}}{\tau\abs{\aj{\pd{\l_h}{t}}{t}} + h\abs{\aj{\pd{\l_h}{\bnu}}{s}}}_\ot\right.\\
&\qquad + \left.\sca{R_{\E,\,\Gamma}}{\tau\abs{\aj{\pd{\l_h}{t}}{t}} + h\abs{\aj{\pd{\l_h}{\bnu}}{s}}}_\gt\right),
\end{aligned}
\end{equation*}
where 
\begin{equation*}
\begin{aligned}
R_\eps&=\alpha(\eps_h - \eps_0)
 - \int_0^T\pd{\E_h}{t}(\cdot,\,t)\cdot\pd{\l_h}{t}(\cdot,\,t)\rmd t\\&\quad
 - \int_0^T\frac{\div\E_h(\cdot,\,t)\div\l_h(\cdot,\,t)}{\eps_h}\rmd t
 + \int_0^T\frac{\aj{(\bnu\cdot\E_h)(\div\l_h)}{s}}{h\eps_h}\rmd t,\\
R_{\l,\,\Omega} &= -\eps_h\frac{\aj{\pd{\l_h}{t}}{t}}{\tau}
 + \frac{\aj{\pd{\l}{\bnu}}{s}}{2h}
 + \frac{\div\l_h}{\eps_h}\grad\eps_h,\\
R_{\l,\,\Gamma}&= \pd{\l_h}{\bnu} + (\E_h - \G)z_\delta^2,\\
R_{\E,\,\Omega}&= -\eps_h\frac{\aj{\pd{\E_h}{t}}{t}}{\tau} 
 + \frac{\aj{\pd{\E_h}{\bnu}}{s}}{2h} 
 + \frac{\grad\eps_h\cdot\E_h}{\eps_h^2}\grad\eps_h
 - \frac{J_{\grad\eps_h}^\mathsf{T}\E_h + J_{\E_h}^\mathsf{T}\grad\eps_h}{\eps_h} \\&\quad
 + \frac{\aj{(\grad\eps_h\cdot\E_h)\bnu}{s}}{2h\eps_h},\\
R_{\E,\,\Gamma}&= \pd{\E_h}{\bnu}-\P.
\end{aligned}
\end{equation*}
Here $J_{\grad\eps_h}$ and $J_{\E_h}$ denotes the Jacobi matrices of $\grad\eps_h$ and $\E_h$, respectively, and $(\cdot)^\mathsf{T}$ denotes matrix transpose.
\end{theorem}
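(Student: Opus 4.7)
The plan is to follow the standard dual-weighted-residual strategy. Step (i): linearize $L(u)-L(u_h)$ at $u_h$ using the continuous and discrete optimality conditions, producing a linear functional of the interpolation residual $\rh u = u - \ih u$. Concretely, the trapezoidal rule applied to $L(u)-L(u_h)=\int_0^1 L'(u_h+s(u-u_h);u-u_h)\rmd s$, combined with $L'(u;\cdot)=0$ on $V$ from Problem~\ref{minlag}, gives, modulo a cubic remainder absorbed in $\lesssim$,
\begin{equation*}
L(u)-L(u_h)\approx \tfrac{1}{2}L'(u_h;u-u_h).
\end{equation*}
Since $\ih u - u_h\in\vht$, Galerkin orthogonality $L'(u_h;\ih u - u_h)=0$ from Problem~\ref{femminlag} lets us replace $u-u_h$ by $\rh u$, so that, splitting by variable via \eqref{dl}, it suffices to bound
\begin{equation*}
\tfrac{1}{2}\Bigl(\pd{L}{\eps}(u_h;\rh\eps) + \adjoint(\eps_h,\l_h,\rh\E) + \maxwell(\eps_h,\E_h,\rh\l)\Bigr).
\end{equation*}

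Step (ii) is elementwise integration by parts of each of the three summands, in both space (over $K\in\th$) and time (over $I\in\It$), transferring derivatives off the test functions $\rh\eps,\rh\E,\rh\l$ onto the computed discrete quantities. For $\maxwell(\eps_h,\E_h,\rh\l)$ the time--derivative term generates jumps $\aj{\pd{\E_h}{t}}{t}$ across the time levels together with a volume contribution; the $\grad\E_h$ term yields face jumps $\aj{\pd{\E_h}{\bnu}}{s}$ and a boundary trace on $\gt$ that combines with $\P$ to give $R_{\E,\Gamma}$; and the nonlinear $(\grad\eps_h\cdot\E_h)/\eps_h$ term, integrated by parts in space, contributes both the transpose--Jacobian terms $J_{\grad\eps_h}^\mathsf{T}\E_h + J_{\E_h}^\mathsf{T}\grad\eps_h$ and the face jump $\aj{(\grad\eps_h\cdot\E_h)\bnu}{s}/(2h\eps_h)$, assembling to $R_{\E,\Omega}$. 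The same procedure applied to $\adjoint(\eps_h,\l_h,\rh\E)$ produces $R_{\l,\Omega}$ and $R_{\l,\Gamma}$. For $\pd{L}{\eps}(u_h;\rh\eps)$, spatial integration by parts on the third term uses $\grad(\rh\eps/\eps_h)=\grad\rh\eps/\eps_h - (\rh\eps/\eps_h^2)\grad\eps_h$ and transfers $\grad\rh\eps$ onto $(\div\l_h)\E_h/\eps_h$, producing the volume term $-(\div\E_h)(\div\l_h)/\eps_h$ and the face jump $\aj{(\bnu\cdot\E_h)(\div\l_h)}{s}/(h\eps_h)$ that, together with $\alpha(\eps_h-\eps_0)-\int_0^T\pd{\E_h}{t}\cdot\pd{\l_h}{t}\rmd t$, constitute $R_\eps$.

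Step (iii) is to invoke standard interpolation estimates $\norm{\rh v}_K\lesssim h_K\norm{\grad v}_K$ and $\norm{\rh v}_I\lesssim\tau\norm{\pd{v}{t}}_I$, and then to bound the unknown continuous derivatives by the computed jumps $\abs{\aj{\pd{v_h}{\bnu}}{s}}$ and $\abs{\aj{\pd{v_h}{t}}{t}}$ via a postprocessing estimate of the part of $\grad v$ and $\pd{v}{t}$ not represented in $\grad v_h$ and $\pd{v_h}{t}$. Pairing each of $R_\eps$, $R_{\E,\Omega}$, $R_{\E,\Gamma}$, $R_{\l,\Omega}$, $R_{\l,\Gamma}$ with the appropriate weight $h\abs{\aj{\pd{v_h}{\bnu}}{s}}+\tau\abs{\aj{\pd{v_h}{t}}{t}}$ and applying Cauchy--Schwarz in $L_2$ then yields the five bracketed terms in the statement. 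The main obstacle will be the careful bookkeeping in step (ii), especially dualizing the nonlinear $(\grad\eps_h\cdot\E_h)/\eps_h$ coupling---recovering both the transpose--Jacobian volume contributions and the correct face-jump factor in $R_{\E,\Omega}$---and justifying that the cubic Taylor remainder dropped in step (i) is genuinely of higher order in $u-u_h$ under the strong convexity assumption \eqref{convexity}.
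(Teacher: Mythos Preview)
Your proposal follows essentially the same dual-weighted-residual route as the paper: linearize, use Galerkin orthogonality to replace $u-u_h$ by $\rh u$, integrate by parts elementwise in space and time to expose the residuals $R_\eps$, $R_{\E,\cdot}$, $R_{\l,\cdot}$, and finish with interpolation estimates. Three small deviations are worth noting. First, the paper linearizes by a direct first-order Taylor expansion at $u_h$, giving $L(u)-L(u_h)\approx L'(u_h;u-u_h)$ with a quadratic (not cubic) remainder and no factor $\tfrac12$; your trapezoidal variant is a legitimate alternative and the extra $\tfrac12$ is absorbed into $C$. Second, the final step is not Cauchy--Schwarz: the estimate is stated as weighted inner products, and the paper simply inserts the pointwise bounds $\abs{\rh\E}\lesssim h\abs{\aj{\partial_\bnu\E_h}{s}}+\tau\abs{\aj{\partial_t\E_h}{t}}$ (and analogously for $\rh\eps$, $\rh\l$) directly under the integral. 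Third, strong convexity \eqref{convexity} plays no role in this theorem; the paper invokes it only later for Theorem~\ref{ape_norm}, so you need not justify the dropped remainder by that assumption.
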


Note that if $\eps_h$ is piecewise linear, $J_{\grad\eps_h}\rvert_K\equiv0$ for every $K\in\th$, hence the corresponding term in $R_{\E,\,\Omega}$ vanishes in that case. 

In the following proof, and thereafter, $C$ is used to denote various constants of moderate size which do not depend on $u$, $u_h$, $h$, or $\tau$.

\begin{proof}
Using the definition of the Fr\'echet derivative we get
\begin{align*}
L(u) - L(u_h) &= L'(u_h;\,u - u_h) + o(\norm{u - u_h}_V)
\end{align*}
The split $u-u_h= (u - \ih u) + (\ih u - u_h) = \rh u + (\ih u - u_h)$ now gives 
\begin{align*}
L(u) - L(u_h) &= L'(u_h;\,\rh u + (\ih u - u_h)) + o(\norm{u - u_h}_V)\\
 &= L'(u_h;\,\rh u) + L'(u_h;\, \ih u - u_h) + o(\norm{u - u_h}_V).
\end{align*}
The second term vanishes since $\ih u-u_h\in V_h$ and $u_h$ solves Problem~\ref{femminlag}, and we neglect the remainder term $o(\norm{u - u_h}_V)$ since it is of higher order with respect to the error. We are then left with
\begin{equation*}
L(u) - L(u_h) \approx L'(u_h;\,\rh u) = \pd{L}{\eps}(u_h;\,\rh\eps) + \pd{L}{\E}(u_h;\,\rh\E) + \pd{L}{\l}(u_h;\,\rh\l),
\end{equation*}
and individual estimation of these three terms will give the stated result.

Starting with the first term, we observe that
\begin{equation*}
\begin{aligned}
\pd{L}{\eps}(u_h;\,\rh\eps) =
\alpha\sca{\eps_h - \eps_0}{\rh\eps}_\Omega
& -\sca{\pd{\E_h}{t}\cdot\pd{\l_h}{t}}{\rh\eps}_\ot \\
& + \sca{(\div\l_h)\E_h}{\grad\left(\frac{\rh\eps}{\eps_h}\right)}_\ot.
\end{aligned}
\end{equation*}

We aim at lifting all derivatives from the interpolation residuals, thus we split the inner product over $\ot$ in the last term above into the sum of inner products over $K_T\ce K\times(0,\,T)$, $K\in \th$:
\begin{equation*}
\sca{(\div\l_h)\E_h}{\grad\left(\frac{\rh\eps}{\eps_h}\right)}_\ot = \sum_{K\in\th}\sca{(\div\l_h)\E_h}{\grad\left(\frac{\rh\eps}{\eps_h}\right)}_{K_T}.
\end{equation*}

We now integrate by parts, using the notation $\partial K_T\ce \partial K\times(0,\,T)$, $\partial K'_T\ce (\partial K\setminus \Gamma)\times(0,\,T)$, $\partial K''_T\ce (\partial K\cap\Gamma)\times(0,\,T)$, $K\in\th$:
\begin{align*}
&\sum_{K\in\th}\sca{(\div\l_h)\E_h}{\grad\left(\frac{\rh\eps}{\eps_h}\right)}_{K_T}\\
&\quad=\sum_{K\in\th}\left(-\sca{\div\big((\div\l_h)\E_h\big)}{\frac{\rh\eps}{\eps_h}}_{K_T}
 + \sca{(\div\l_h)(\bnu\cdot\E_h)}{\frac{\rh\eps}{\eps_h}}_{\partial K_T}\right)\\
&\quad = \sum_{K\in\th}\left(-\sca{\frac{\grad(\div\l_h)\cdot\E_h}{\eps_h}}{\rh\eps}_{K_T}
 - \sca{\frac{(\div\l_h)(\div\E_h)}{\eps_h}}{\rh\eps}_{K_T}\right.\\
&\qquad\qquad\quad\left.+\sca{\frac{(\div\l_h)(\bnu\cdot\E_h)}{\eps_h}}{\rh\eps}_{\partial K_T'}
 + \sca{\frac{(\div\l_h)(\bnu\cdot\E_h)}{\eps_h}}{\rh\eps}_{\partial K_T''}\right)\\
&\quad = -\sum_{K\in\th}\sca{\frac{\grad(\div\l_h)\cdot\E_h}{\eps_h}}{\rh\eps}_{K_T}
 - \sca{\frac{(\div\l_h)(\div\E_h)}{\eps_h}}{\rh\eps}_\ot \\
&\qquad + \sum_{K\in\th}\sca{\frac{(\div\l_h)(\bnu\cdot\E_h)}{\eps_h}}{\rh\eps}_{\partial K_T'}
 + \sca{\frac{(\div\l_h)(\bnu\cdot\E_h)}{\eps_h}}{\rh\eps}_\gt.
\end{align*}

We observe that $\grad(\div\l_h)\equiv0$ on every $K_T$, $K\in\th$, since $\l_h$ is piecewise linear, and that $\eps\equiv1$ on $\Gamma$ so that $\rh\eps\rvert_\Gamma\equiv0$. With this in mind, the above calculations yields
\begin{align*}
\sca{(\div\l_h)\E_h}{\grad\left(\frac{\rh\eps}{\eps_h}\right)}_\ot &=
 - \sca{\frac{(\div\l_h)(\div\E_h)}{\eps_h}}{\rh\eps}_\ot \\&\quad
 + \sum_{K\in\th}\sca{\frac{(\div\l_h)(\bnu\cdot\E_h)}{\eps_h}}{\rh\eps}_{\partial K_T'}.
\end{align*}

In order to obtain a residual defined in the whole of $\Omega$, as opposed to one containing terms defined only on edges of elements $K\in\th$, we should manipulate the last term in the above expression further. Observe that
\begin{align*}
\sum_{K\in\th}\sca{\frac{1}{\eps_h}(\div\l_h)(\bnu\cdot\E_h)}{\rh\eps}_{\partial K_T'} = \frac{1}{2}\sum_{K\in\th}\sca{\frac{1}{\eps_h}\jm{(\div\l_h)(\bnu\cdot\E_h)}{s}}{\rh\eps}_{\partial K_T'},
\end{align*}
where the factor $\frac{1}{2}$ appears since every internal edge is counted exactly twice in the sum over all elements $K\in \th$.

Using the approximation
\begin{equation*}
\int_{\partial K}f\rmd S \approx \int_K\frac{\tilde{f}}{h_K}\rmd\x
\end{equation*}
where $\tilde{f}$ denotes the maximum of $f$ over $\partial K$ (see for instance \cite{eehj96}), we finally get
\begin{align*}
\sum_{K\in\th}\sca{\frac{1}{\eps_h}\jm{(\div\l_h)(\bnu\cdot\E_h)}{s}}{\rh\eps}_{\partial K_T'}&\approx
\sum_{K\in\th}\sca{\frac{1}{h_K\eps_h}\aj{(\div\l_h)(\bnu\cdot\E_h)}{s}}{\rh\eps}_{K_T}\\
&= \sca{\frac{1}{h\eps_h}\aj{(\div\l_h)(\bnu\cdot\E_h)}{s}}{\rh\eps}_\ot,
\end{align*} 
which gives
\begin{align*}
\pd{L}{\eps}(u_h;\,\rh\eps) &=
\alpha\sca{\eps_h - \eps_0}{\rh\eps}_\Omega
 + \sca{\pd{\E_h}{t}\cdot\pd{\l_h}{t}}{\rh\eps}_\ot \\&\qquad
 - \sca{\frac{(\div\l_h)(\div\E_h)}{\eps_h}}{\rh\eps}_\ot
 + \sca{\frac{1}{h\eps_h}\aj{(\div\l_h)(\bnu\cdot\E_h)}{s}}{\rh\eps}_\ot \\&
 = \sca{R_\eps}{\rh\eps}_\Omega.
\end{align*}

We can now estimate $\rh\eps$ in terms of $\eps_h$, using standard interpolation techniques (see for instance \cite{js95}), as 
\begin{equation*}
\abs{\rh\eps}\leq Ch^2\abs{D^2\eps}\approx Ch^2\abs{\frac{\aj{\pd{\eps_h}{\bnu}}{s}}{h}}=Ch\abs{\aj{\pd{\eps_h}{\bnu}}{s}},
\end{equation*}
where $D^2$ denotes derivatives of second order with respect to $\x$. Thus
\begin{equation*}
\abs{\pd{L}{\eps}(u_h;\,\rh\eps)}\lesssim C\sca{\abs{R_\eps}}{h\abs{\aj{\pd{\eps_h}{\bnu}}{s}}}_\Omega.
\end{equation*}

We continue with
\begin{align*}
\pd{L}{\E}(u_h;\,\rh\E) &= \sca{(\E_h - \G)z_\delta^2}{\rh\E}_\gt
 - \sca{\eps_h\pd{\l_h}{t}}{\pd{\rh\E}{t}}_\ot \\&\qquad
 + \sca{\grad\l_h}{\grad\rh\E}_\ot 
 + \sca{\frac{\div\l_h}{\eps_h}\grad\eps_h}{\rh\E}_\ot.
\end{align*}
Again, we seek to lift derivatives from the interpolation residuals, and so we use integration by parts to get
\begin{align*}
\sca{\eps_h\pd{\l_h}{t}}{\pd{\rh\E}{t}}_\ot &= \sum_{k=1}^{N_\tau}\int_{t_{k-1}}^{t_k}\sca{\eps_h\pd{\l_h}{t}}{\pd{\rh\E}{t}}_\Omega\rmd t \\
&= -\sum_{k=1}^{N_\tau}\int_{t_{k-1}}^{t_k}\sca{\eps_h\pd{^2\l_h}{t^2}}{\rh\E}_\Omega\rmd t \\&\quad
 + \sum_{k=1}^{N_\tau}\Big(\sca{\eps_h\pd{\l_h}{t}}{\rh\E}_\Omega\rvert_{t=t_k} - \sca{\eps_h\pd{\l_h}{t}}{\rh\E}_\Omega\rvert_{t=t_{k-1}}\Big)\\
&= \sum_{k = 1}^{N_\tau-1}\sca{\eps_h\jm{\pd{\l_h}{t}}{t}}{\rh\E}_\Omega\rvert_{t=t_k} 
 + \sca{\eps_h\pd{\l_h}{t}}{\rh\E}_\Omega\rvert_{t=T} \\
&\quad - \sca{\eps_h\pd{\l_h}{t}}{\rh\E}_\Omega\rvert_{t=0},
\end{align*}
where we have used the fact that $\pd{^2\l_h}{t^2}\equiv0$ on each subinterval $(t_{k-1},\,t_k)$, for the piecewise linear function $\l_h$.

Since $\rh\E(\cdot,\,0)=\pd{\l_h}{t}(\cdot,\,T)=0$, this leaves us with
\begin{equation*}
\sca{\eps_h\pd{\l_h}{t}}{\pd{\rh\E}{t}}_\ot = \sum_{k = 1}^{N_\tau-1}\sca{\eps_h\jm{\pd{\l_h}{t}}{t}}{\rh\E}_\Omega\rvert_{t=t_k}.
\end{equation*}
We now approximate the boundary terms by terms defined on the whole interval, using
\begin{equation*}
f(t_k)\approx \frac{1}{\tau}\int_{t_{k-1}}^{t_k}f(t)\rmd t,
\end{equation*}
that is
\begin{align*}
\sum_{k = 1}^{N_\tau-1}\sca{\eps_h\jm{\pd{\l_h}{t}}{t}}{\rh\E}_\Omega\rvert_{t=t_k}&\approx\sum_{k = 1}^{N_\tau-1}\frac{1}{\tau}\int_{t_{k-1}}^{t_k}\sca{\eps_h\aj{\pd{\l_h}{t}}{t}}{\rh\E}_\Omega\rmd t\\
&=\sca{\frac{\eps_h}{\tau}\aj{\pd{\l_h}{t}}{t}}{\rh\E}_\ot.
\end{align*}

Moving on to
\begin{equation*}
\sca{\grad\l_h}{\grad\rh\E}_\ot = \sum_{K\in\th}\sca{\grad\l_h}{\grad\rh\E}_{K_T},
\end{equation*}
we integrate by parts and use the fact that $\lap\l_h\equiv0$ in every $K\in\th$ to obtain
\begin{align*}
\sca{\grad\l_h}{\grad\rh\E}_\ot &=\sum_{K\in\th}\left(-\sca{\lap\l_h}{\rh\E}_{K_T} + \sca{\pd{\l_h}{\bnu}}{\rh\E}_{\partial K_T}\right)\\
&=\sum_{K\in\th}\left(\sca{\pd{\l_h}{\bnu}}{\rh\E}_{\partial K'_T} + \sca{\pd{\l_h}{\bnu}}{\rh\E}_{\partial K''_T}\right)\\
&=\frac{1}{2}\sum_{K\in\th}\sca{\jm{\pd{\l_h}{\bnu}}{s}}{\rh\E}_{\partial K'_T} + \sca{\pd{\l_h}{\bnu}}{\rh\E}_\gt.
\end{align*}

We again approximate inner products over $\partial K'_T$ by inner products over $K_T$, so that
\begin{equation*}
\sca{\grad\l_h}{\grad\rh\E}_\ot \approx \sca{\frac{1}{2h}\aj{ \pd{\l_h}{\bnu}}{s}}{\rh\E}_\ot + \sca{\pd{\l_h}{\bnu}}{\rh\E}_\gt.
\end{equation*}
Together with previous calculations, this gives
\begin{align*}
\pd{L}{\E}(u_h;\,\rh\E) &\approx  \sca{(\E_h - \G)z_\delta^2}{\rh\E}_\gt
 - \sca{\frac{\eps_h}{\tau}\aj{\pd{\l_h}{t}}{t}}{\rh\E}_\ot \\&\qquad
 + \sca{\frac{1}{2h}\aj{\pd{\l_h}{\bnu}}{s}}{\rh\E}_\ot
 + \sca{\pd{\l_h}{\bnu}}{\rh\E}_\gt \\&\qquad
 + \sca{\frac{\div\l_h}{\eps_h}\grad\eps_h}{\rh\E}_\ot\\
&=\sca{R_{\l,\,\Omega}}{\rh\E}_\ot + \sca{R_{\l,\,\Gamma}}{\rh\E}_\gt.
\end{align*}
We once more use interpolation estimates
\begin{equation*}
\abs{\rh\E}\leq C\left(h^2\abs{D^2\E} + \tau^2\abs{\pd{^2\E}{t^2}} \right)\approx C\left(h\abs{\aj{\pd{\E_h}{\bnu}}{t}} + \tau\abs{\aj{\pd{\E_h}{t}}{s}} \right).
\end{equation*}
to get
\begin{align*}
\abs{\pd{L}{\E}(u_h;\,\rh\E)} &\lesssim C\left(\sca{\abs{R_{\l,\,\Omega}}}{h\abs{\aj{\pd{\E_h}{\bnu}}{t}} + \tau\abs{\aj{\pd{\E_h}{t}}{s}} }_\ot\right. \\&\qquad
 + \left. \sca{\abs{R_{\l,\,\Gamma}}}{h\abs{\aj{\pd{\E_h}{\bnu}}{t}} + \tau\abs{\aj{\pd{\E_h}{t}}{s}}}_\gt\right).
\end{align*}

It remains to estimate
\begin{align*}
\pd{L}{\l}(u_h;\,\rh\l) &=
 - \sca{\eps_h\pd{\E_h}{t}}{\pd{\rh\l}{t}}_\ot
 + \sca{\grad\E_h}{\grad\rh\l}_\ot \\&\quad
 + \sca{\frac{\grad\eps_h\cdot\E_h}{\eps_h}}{\div\rh\l}_\ot
 - \sca{\P}{\rh\l}_\gt.
\end{align*}
Just as before, we obtain
\begin{equation*}
\sca{\eps_h\pd{\E_h}{t}}{\pd{\rh\l}{t}}_\ot \approx \sca{\frac{\eps_h}{\tau}\aj{\pd{\E_h}{t}}{t}}{\rh\l}_\ot
\end{equation*}
and
\begin{equation*}
\sca{\grad\E_h}{\grad\rh\l}_\ot\approx \sca{\frac{1}{2h} \aj{ \pd{\E_h}{\bnu} }{s} }{\rh\l}_\ot + \sca{\pd{\E_h}{\bnu}}{\rh\l}_\gt.
\end{equation*}
Consider the term
\begin{equation*}
\sca{\frac{\grad\eps_h\cdot\E_h}{\eps_h}}{\div\rh\l}_\ot = \sum_{K\in\th}\sca{\frac{\grad\eps_h\cdot\E_h}{\eps_h}}{\div\rh\l}_{K_T}.
\end{equation*}
Integration by parts yields
\begin{align*}
&\sca{\frac{\grad\eps_h\cdot\E_h}{\eps_h}}{\div\rh\l}_\ot \\&\qquad=
\sum_{K\in\th}\left(-\sca{\grad\left(\frac{\grad\eps_h\cdot\E_h}{\eps_h}\right)}{\rh\l}_{K_T}
  + \sca{\frac{\grad\eps_h\cdot\E_h}{\eps_h}\bnu}{\rh\l}_{\partial K_T}\right) \\&\qquad=
\sum_{K\in\th}\sca{\frac{\grad\eps_h\cdot\E_h}{\eps_h^2}\grad\eps_h - \frac{J^\mathsf{T}_{\grad\eps_h}\E_h + J^\mathsf{T}_{\E_h}\grad\eps_h}{\eps_h} }{\rh\l}_{K_T}\\&\qquad\quad
 + \sum_{K\in\th}\left(\sca{\frac{\grad\eps_h\cdot\E_h}{\eps_h}\bnu}{\rh\l}_{\partial K'_T}
 + \sca{\frac{\grad\eps_h\cdot\E_h}{\eps_h}\bnu}{\rh\l}_{\partial K''_T}\right) \\&\qquad=
\sca{\frac{\grad\eps_h\cdot\E_h}{\eps_h^2}\grad\eps_h - \frac{J^\mathsf{T}_{\grad\eps_h}\E_h + J^\mathsf{T}_{\E_h}\grad\eps_h}{\eps_h} }{\rh\l}_\ot \\&\qquad\quad
+ \sum_{K\in\th}\sca{\frac{1}{\eps_h}\jm{(\grad\eps_h\cdot\E_h)\bnu}{s}}{\rh\l}_{\partial K'_T}
+ \sca{\frac{\grad\eps_h\cdot\E_h}{\eps_h}\bnu}{\rh\l}_\gt,
\end{align*}
where for the second equality we have used the identity $\grad(\grad\eps_h\cdot\E_h)=J^\mathsf{T}_{\grad\eps_h}\E_h + J^\mathsf{T}_{\E_h}\grad\eps_h$.

Noting that $\grad\eps_h\rvert_\Gamma\equiv0$ as $\eps_h\in U^\eps_h$, and using the usual approximation for $\jm{\cdot}{s}$ inside elements $K\in\th$ we get
\begin{align*}
\sca{\frac{\grad\eps_h\cdot\E_h}{\eps_h}}{\div\rh\l}_\ot &\approx
\sca{\frac{\grad\eps_h\cdot\E_h}{\eps_h^2}\grad\eps_h - \frac{J^\mathsf{T}_{\grad\eps_h}\E_h + J^\mathsf{T}_{\E_h}\grad\eps_h}{\eps_h}}{\rh\l}\\&\quad + \sca{\frac{1}{2h\eps_h}\aj{(\grad\eps_h\cdot\E_h)\bnu}{s}}{\rh\l}_\ot.
\end{align*}

Combining the results for $\pd{L}{\l}(u_h;\,\rh\l)$ and estimating $\rh\l$ in terms of $\l_h$ just as $\rh\E$ was estimated in terms of $\E_h$ gives
\begin{align*}
\abs{\pd{L}{\l}(u_h;\,\rh\l)}
&\lesssim C\left(\sca{\abs{R_{\E,\,\Omega}}}{h\abs{\aj{\pd{\l_h}{\bnu}}{s}}  + \tau\abs{\aj{\pd{\l_h}{t}}{t}}}_\ot \right.\\& \qquad + \left.\sca{\abs{R_{\E,\,\Gamma}}}{h\abs{\aj{\pd{\l_h}{\bnu}}{s}} + \tau\abs{\aj{\pd{\l_h}{t}}{t}}}_\gt\right),
\end{align*}
which completes the proof.
\end{proof}

One should note that the terms in the error estimate of Theorem~\ref{ape_lag} which are derived from $\pd{L}{\l}(u_h;\,\rh\l)$ and $\pd{L}{\E}(u_h;\,\rh\E)$ estimate how accurately the solutions of Problem~\ref{weakdirect} and Problem~\ref{weakadjoint} are approximated by the solutions of Problem~\ref{femdirect} and Problem~\ref{femadjoint}, respectively, for the approximate coefficient $\eps_h$. The remaining term, $\sca{R_\eps}{h\abs{\aj{\pd{\eps_h}{\bnu}}{s}}}_\Omega$ can be interpreted as the error induced by approximating $\eps$ by $\eps_h$. Thus, if we are mainly interested in that error, or if we can postulate that the finite element approximations $\E_h$ and $\l_h$ are computed with relatively high accuracy, then $\abs{R_\eps}$ may be used as an error indicator by itself. The significance of $R_\eps$ will be further illustrated by the error estimates for the coefficient and for the Tikhonov functional.

We now proceed with an error estimate for the coefficient itself. An error estimate for the Tikhonov functional will follow as a corollary.

\begin{theorem} (A posteriori error estimate for the coefficient.) \label{ape_norm}
Suppose that the initial approximation $\eps_0$ and the regularization parameter $\alpha$ are such that the strong convexity estimate \eqref{convexity} holds. Let $u=(\eps,\,\E,\,\l)\in U$ be the solution to Problem~\ref{minlag}, and let $u_h=(\eps_h,\,\E_h,\,\l_h)\in \uht$ be the solution to Problem~\ref{femminlag}, computed on meshes $\th$ and $\It$. Denote by $\tilde\E$ and $\tilde\l$ the solutions to Problem~\ref{weakdirect} and Problem~\ref{weakadjoint}, respectively, with permittivity $\eps_h$, and set $\tilde{u} =(\eps_h,\,\tilde\E,\,\tilde\l)\in U$. Then there exists a constant $C$, which does not depend on $u$, $u_h$, $h$, or $\tau$, such that 
\begin{equation*}
\norm{\eps - \eps_h}_{V^\eps} \lesssim C( c_\eps\eta + \norm{R_\eps}_\Omega),
\end{equation*}
where $c_\eps\ce\max\{1,\,\norm{\grad\eps_h}_{L_\infty(\Omega)}\}$ and $\eta =  \eta(u_h)$ is defined by
\begin{align*}
\eta &\ce
   \sca{ \frac{1}{\tau} \abs{\aj{\pd{\l_h}{t}}{t}} + \abs{\div\l_h} }{h\abs{\aj{\pd{\E_h}{\bnu}}{s}} + \tau\abs{\aj{\pd{\E_h}{t}}{t}}  }_\ot \\ &\qquad
 + \sca{\frac{1}{\tau}\abs{\aj{\pd{\E_h}{t}}{t}}}{h\abs{\aj{\pd{\l_h}{\bnu}}{s}} + \tau\abs{\aj{\pd{\l_h}{t}}{t}} }_\ot\\&\qquad
 + \sca{\abs{\E_h}}{ \abs{\aj{\pd{\l_h}{\bnu}}{s}} + \tau\abs{\aj{\pd{\div\l_h}{t}}{t}} }_\ot.
\end{align*}
\end{theorem}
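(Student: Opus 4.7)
The plan is to leverage the strong convexity estimate \eqref{convexity} to reduce the estimate to bounding a single scalar, and then exploit identity \eqref{dj2dl} together with Galerkin orthogonality for the direct and adjoint problems. Setting $\eps_1 = \eps$ and $\eps_2 = \eps_h$ in \eqref{convexity} yields
\[
c\norm{\eps - \eps_h}_{V^\eps}^2 \leq F'(\eps;\,\eps-\eps_h) - F'(\eps_h;\,\eps-\eps_h).
\]
Because $u$ solves Problem~\ref{minlag}, $\pd{L}{\eps}(u;\,\cdot)$ vanishes identically, and \eqref{dj2dl} then gives $F'(\eps;\,\cdot) = 0$. Applying \eqref{dj2dl} at $\eps_h$, with $\tilde u = (\eps_h,\,\tilde\E,\,\tilde\l)$ consisting of the exact weak direct and adjoint solutions, produces $F'(\eps_h;\,\cdot) = \pd{L}{\eps}(\tilde u;\,\cdot)$. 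Thus the entire proof reduces to showing that $|\pd{L}{\eps}(\tilde u;\,\eps-\eps_h)| \lesssim C(c_\eps\eta + \norm{R_\eps}_\Omega)\,\norm{\eps-\eps_h}_{V^\eps}$.

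I would then split
\[
\pd{L}{\eps}(\tilde u;\,\eps-\eps_h) = \pd{L}{\eps}(u_h;\,\eps-\eps_h) + \bigl[\pd{L}{\eps}(\tilde u;\,\eps-\eps_h) - \pd{L}{\eps}(u_h;\,\eps-\eps_h)\bigr].
\]
The first term is handled by the calculation already done for Theorem~\ref{ape_lag}: since both $\eps$ and $\eps_h$ equal $1$ on $\Gamma$, the difference $\eps-\eps_h$ plays the same boundary-vanishing role that $\rh\eps$ did there, so integrating by parts element by element in space and invoking $\grad(\div\l_h) = 0$ in each $K$ yields $\pd{L}{\eps}(u_h;\,\eps-\eps_h) \approx \sca{R_\eps}{\eps-\eps_h}_\Omega$, which Cauchy--Schwarz bounds by $\norm{R_\eps}_\Omega\,\norm{\eps-\eps_h}_{V^\eps}$.

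For the bracket term, I set $e_\E \ce \tilde\E - \E_h$ and $e_\l \ce \tilde\l - \l_h$ and discard the quadratic cross-contributions $\pd{e_\E}{t}\cdot\pd{e_\l}{t}$ and $(\div e_\l)\,e_\E$ as higher order, leaving
\begin{align*}
&-\sca{\pd{e_\E}{t}\cdot\pd{\l_h}{t}}{\oeps}_\ot - \sca{\pd{\E_h}{t}\cdot\pd{e_\l}{t}}{\oeps}_\ot\\
&\quad + \sca{(\div\l_h)\,e_\E}{\grad(\oeps/\eps_h)}_\ot + \sca{(\div e_\l)\,\E_h}{\grad(\oeps/\eps_h)}_\ot,
\end{align*}
with $\oeps = \eps-\eps_h$. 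Integration by parts element by element in space (using $\lap\l_h = \lap\E_h = 0$ piecewise) and interval by interval in time (using $\pd{^2\l_h}{t^2} = \pd{^2\E_h}{t^2} = 0$ piecewise and the end-time conditions $e_\E(\cdot,0) = e_\l(\cdot,T) = 0$, $\pd{\l_h}{t}(\cdot,T) = \pd{\E_h}{t}(\cdot,0) = 0$), together with the $\int_{\partial K} f\rmd S \approx \int_K \tilde f/h_K \rmd \x$ and analogous temporal approximations used in Theorem~\ref{ape_lag}, convert these into inner products pairing the weights $\tfrac{1}{\tau}\aj{\pd{\l_h}{t}}{t}$, $\div\l_h$, $\tfrac{1}{\tau}\aj{\pd{\E_h}{t}}{t}$, $\E_h$, and $\tfrac{1}{\tau}\aj{\pd{\div\l_h}{t}}{t}$ against $e_\E$, $e_\l$, multiplied by either $\oeps$ or $\grad(\oeps/\eps_h)$. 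Galerkin orthogonality -- $\maxwell(\eps_h,\,e_\E,\,\bphi_h) = 0$ for every $\bphi_h\in\vht^\mathrm{adj}$ and $\adjoint(\eps_h,\,e_\l,\,\bphi_h) = 0$ for every $\bphi_h\in\vht^\mathrm{dir}$ -- permits, via a duality argument applied to each isolated linear functional of $e_\E$ (resp.\ $e_\l$), replacement of these error factors by the interpolation residuals $\rh\tilde\E$, $\rh\tilde\l$, which standard interpolation estimates bound by $h|\aj{\pd{\E_h}{\bnu}}{s}| + \tau|\aj{\pd{\E_h}{t}}{t}|$ and the analogue with $\l_h$. Summing reproduces the three inner products defining $\eta$.

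The main technical obstacle is precisely this last bookkeeping step: tracking every spatial-edge and time-node jump contribution through the integrations by parts and verifying that the emerging pairings collect into the three parts of $\eta$ in the stated form -- a computation of the same flavor as in Theorem~\ref{ape_lag} but noticeably heavier, because one is now linearizing around $u_h$ rather than just using $u_h$ directly. The constant $c_\eps = \max\{1,\,\norm{\grad\eps_h}_{L_\infty(\Omega)}\}$ enters whenever a pairing is estimated against $\grad(\oeps/\eps_h)$: expanding $\grad(\oeps/\eps_h) = \grad\oeps/\eps_h - \oeps\,\grad\eps_h/\eps_h^2$ and using $\eps_h \geq 1$ yields $\norm{\grad(\oeps/\eps_h)}_\Omega \lesssim c_\eps\,\norm{\oeps}_{V^\eps}$. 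Assembling the contributions and dividing by $\norm{\eps-\eps_h}_{V^\eps}$ produces the claimed bound.
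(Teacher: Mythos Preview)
Your overall strategy matches the paper's exactly: invoke strong convexity, use $F'(\eps;\cdot)=0$ and \eqref{dj2dl} to reduce everything to $|\pd{L}{\eps}(\tilde u;\eps-\eps_h)|$, split off $\pd{L}{\eps}(u_h;\eps-\eps_h)$ and bound it by $\norm{R_\eps}_\Omega\norm{\eps-\eps_h}_{V^\eps}$ via the calculation from Theorem~\ref{ape_lag}. The treatment of $c_\eps$ is also the same as the paper's.

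Where you diverge is in the bracket term $\Theta_1 = \pd{L}{\eps}(\tilde u;\oeps)-\pd{L}{\eps}(u_h;\oeps)$. The paper does \emph{not} use a duality argument. Instead it linearizes $\Theta_1$ via mixed second Fr\'echet derivatives and then \emph{exchanges the order of differentiation}, writing
\[
\Theta_1 \approx D_1\rvert_{\oeps}\Bigl(\pd{L}{\E}(u_h;\,\tilde\E-\E_h) + \pd{L}{\l}(u_h;\,\tilde\l-\l_h)\Bigr).
\]
The point of this rearrangement is that, \emph{before} the $\eps$-derivative is applied, the inner quantities are precisely $\pd{L}{\E}(u_h;\cdot)$ and $\pd{L}{\l}(u_h;\cdot)$, which vanish on $V_h^{\mathrm{dir}}$ and $V_h^{\mathrm{adj}}$ respectively (because $\l_h$ and $\E_h$ solve the discrete adjoint and direct problems). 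This is what licenses the direct substitution $\tilde\E-\E_h\to\rh\tilde\E$, $\tilde\l-\l_h\to\rh\tilde\l$. Only \emph{after} that substitution does the paper compute $D_1\rvert_{\oeps}$ explicitly, producing the four terms you wrote down, and then integrate by parts in time.

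Your route---expand first, integrate by parts, and then appeal to ``Galerkin orthogonality via a duality argument''---does not give the replacement you claim. After integration by parts the individual functionals of $e_\E$, $e_\l$ (weighted by $\oeps$ or $\grad(\oeps/\eps_h)$) are no longer the discrete weak forms $\adjoint(\eps_h,\l_h,\cdot)$ or $\maxwell(\eps_h,\E_h,\cdot)$, so Galerkin orthogonality does not annihilate their action on $\ih\tilde\E-\E_h$. A genuine duality argument would introduce auxiliary dual solutions and produce bounds in terms of \emph{those}, not a direct swap of $e_\E$ for $\rh\tilde\E$. The exchange-of-differentiation device is the mechanism the paper actually relies on, and it is the piece your proposal is missing.
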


\begin{proof}
Using strong convexity \eqref{convexity}, we obtain
\begin{equation*}
\norm{\eps - \eps_h}_{V^\eps}^2\leq c \left(F'(\eps;\,\eps - \eps_h) - F'(\eps_h;\,\eps-\eps_h)\right).
\end{equation*}
Since $\eps$ minimizes $F(\eps)$ we have $F'(\eps;\eps-\eps_h)=0$ and thus
\begin{equation} \label{pfthm2_00}
\norm{\eps-\eps_h}_{V^\eps}^2\leq c\abs{F'(\eps_h;\,\eps - \eps_h)} = c\abs{\pd{L}{\eps}(\tilde u;\,\eps - \eps_h)}, 
\end{equation}
where the last equality follows from \eqref{dj2dl}.

We expand
\begin{equation} \label{pfthm2_0}
\begin{aligned}
\abs{\pd{L}{\eps}(\tilde u;\,\eps - \eps_h)} &= \abs{\pd{L}{\eps}(\tilde u;\,\eps - \eps_h) -\pd{L}{\eps}(u_h;\,\eps - \eps_h) + \pd{L}{\eps}(u_h;\,\eps - \eps_h)}\\
&\leq \abs{\pd{L}{\eps}(\tilde u;\,\eps - \eps_h) -\pd{L}{\eps}(u_h;\,\eps - \eps_h)} + \abs{\pd{L}{\eps}(u_h;\,\eps - \eps_h)}\\
&\ec \abs{\Theta_1} + \abs{\Theta_2},
\end{aligned}
\end{equation}
and estimate the two terms $\abs{\Theta_1}$ and $\abs{\Theta_2}$ separately.

For $\Theta_1$ we use the linearization
\begin{align*}
\Theta_1&=
 \pd{^2L}{\E\partial\eps}(u_h;\,\tilde\E - \E_h;\eps - \eps_h) + o(\lVert\tilde\E - \E_h\rVert_{\ot,\,1}) \\ &\quad
 + \pd{^2L}{\l\partial\eps}(u_h;\,\tilde\l - \l_h;\eps - \eps_h) + o(\lVert\tilde\l - \l_h\rVert_{\ot,\,1}),
\end{align*}
where $\pd{^2L}{\E\partial\eps}$ and $\pd{^2L}{\l\partial\eps}$ denote mixed second partial Fr\'echet derivatives of $L$. Again, the remainder terms are neglected as they are of higher order with respect to the error. Thus, after exchanging the order of differentiation, we are left with
\begin{equation}
\begin{aligned}\label{pfthm2_1}
\Theta_1 &\approx D_1\rvert_{\eps - \eps_h}\left(\pd{L}{\E}(u_h;\,\tilde\E - \E_h) + \pd{L}{\l}(u_h;\,\tilde\l - \l_h)\right),
\end{aligned}
\end{equation}
where $D_1\rvert_{\eps - \eps_h}$ denotes differentiation with respect to the first component in $u_h$ and action on $\eps - \eps_h$.

We split $\tilde\E - \E_h=(\tilde\E - \ih\tilde\E) + (\ih\tilde\E - \E_h) = \rh\tilde\E + (\ih\tilde\E - \E_h)$ and use the fact that $\l_h$ solves Problem~\ref{weakadjoint} with coefficient $\eps_h$, so that $\pd{L}{\E}(u_h;\,\ih\tilde\E-\E_h) = 0$ as $\ih\tilde\E-\E_h\in V_h^\mathrm{dir}$. This gives
\begin{align} \label{pfthm2_2}
\pd{L}{\E}(u_h;\,\tilde\E - \E_h) = \pd{L}{\E}(u_h;\,\rh\tilde\E) + \pd{L}{\E}(u_h;\,\ih\tilde\E - \E_h) = \pd{L}{\E}(u_h;\,\rh\tilde\E).
\end{align}
Similarly, we have
\begin{align} \label{pfthm2_3}
\pd{L}{\l}(u_h;\,\tilde\l - \l_h) = \pd{L}{\l}(u_h;\,\rh\tilde\l) + \pd{L}{\l}(u_h;\,\ih\tilde\l - \l_h) = \pd{L}{\l}(u_h;\,\rh\tilde\l).
\end{align}
as $\E_h$ solves Problem~\ref{weakdirect} with coefficient $\eps_h$.

Combining \eqref{pfthm2_1}, \eqref{pfthm2_2}, and \eqref{pfthm2_3} gives
\begin{align*}
\Theta_1 &\approx D_1\rvert_{\eps - \eps_h}\left(\pd{L}{\E}(u_h;\,\rh\tilde\E) + \pd{L}{\l}(u_h;\,\rh\tilde\l)\right)\\
 &= -\sca{(\eps - \eps_h)\pd{\rh\tilde\E}{t}}{\pd{\l_h}{t}}_\ot + \sca{\grad\left(\frac{\eps - \eps_h}{\eps_h}\right)\rh\tilde\E}{\div\l_h}_\ot \\
 & \quad-\sca{(\eps - \eps_h)\pd{\E_h}{t}}{\pd{\rh\tilde\l}{t}}_\ot + \sca{\grad\left(\frac{\eps - \eps_h}{\eps_h}\right)\E_h}{\div\rh\tilde\l}_\ot.
\end{align*}

In the same manner as in the proof of Theorem~\ref{ape_lag}, we integrate by parts in time and approximate jumps to get
\begin{align*}
-\sca{(\eps - \eps_h)\pd{\rh\tilde\E}{t}}{\pd{\l_h}{t}}_\ot &= \sum_{k=1}^{N_\tau}\int_{t_{k-1}}^{t_k}\sca{(\eps - \eps_h)\rh\tilde\E}{\jm{\pd{\l_h}{t}}{t}}_\Omega\rmd t \\
&\approx \sca{(\eps - \eps_h)\rh\tilde\E}{\frac{1}{\tau}\aj{\pd{\l_h}{t}}{t}}_\ot
\end{align*}
and
\begin{align*}
-\sca{(\eps - \eps_h)\pd{\E_h}{t}}{\pd{\rh\tilde\l}{t}}_\ot &= \sum_{k=1}^{N_\tau}\int_{t_{k-1}}^{t_k}\sca{ (\eps - \eps_h)\jm{ \pd{\E_h}{t} }{t} }{ \rh\tilde\l }_\Omega\rmd t\\ 
&\approx \sca{ (\eps - \eps_h)\frac{1}{\tau}\aj{ \pd{\E_h}{t} }{t} }{ \rh\tilde\l }_\ot.
\end{align*}
Thus
\begin{align*}
  \Theta_1 &\lesssim \sca{\abs{\eps - \eps_h}\frac{1}{\tau}\abs{\aj{\pd{\l_h}{t}}{t}}}{ \abs{\rh\tilde\E}}_\ot
 + \sca{\abs{\grad\left(\frac{\eps - \eps_h}{\eps_h}\right)}\abs{\div\l_h} }{ \abs{\rh\tilde\E}}_\ot\\
 & \quad+\sca{\abs{\eps - \eps_h}\frac{1}{\tau}\abs{\aj{\pd{\E_h}{t}}{t}}}{\abs{\rh\tilde\l}}_\ot
 + \sca{\abs{\grad\left(\frac{\eps - \eps_h}{\eps_h}\right)}\abs{\E_h}}{\abs{\div\rh\tilde\l}}_\ot\\
&\leq \norm{\eps - \eps_h}_{L_\infty(\Omega)}\left(\sca{\frac{1}{\tau}\abs{\aj{\pd{\l_h}{t}}{t}}}{ \abs{\rh\tilde\E}}_\ot + \sca{\frac{1}{\tau}\abs{\aj{\pd{\E_h}{t}}{t}}}{\abs{\rh\tilde\l}}_\ot\right) \\&\quad
+ \norm{\grad\left(\frac{\eps - \eps_h}{\eps_h}\right)}_{L_\infty(\Omega)}\left(\sca{\abs{\div\l_h} }{ \abs{\rh\tilde\E}}_\ot + \sca{\abs{\E_h}}{\abs{\div\rh\tilde\l}}_\ot\right).
\end{align*}

Note that
\begin{align*}
\norm{\grad\left(\frac{\eps - \eps_h}{\eps_h}\right)}_{L_\infty(\Omega)} &=
 \norm{\frac{\grad(\eps-\eps_h)}{\eps_h}  - \frac{(\eps-\eps_h)\grad\eps_h}{\eps_h^2}}_{L_\infty(\Omega)} \\
&\leq\norm{\frac{1}{\eps_h}}_{L_\infty(\Omega)}\norm{\grad(\eps - \eps_h)}_{L_\infty(\Omega)} \\&\quad + \norm{\frac{1}{\eps_h^2}}_{L_\infty(\Omega)}\norm{\grad\eps_h}_{L_\infty(\Omega)}\norm{\eps-\eps_h}_{L_\infty(\Omega)}
\end{align*}
and observe following facts:
\begin{align*}
\norm{\eps-\eps_h}_{L_\infty(\Omega)} + \norm{\grad(\eps-\eps_h)}_{L_\infty(\Omega)} &\leq C\norm{\eps-\eps_h}_{V^\eps},\\
\norm{\frac{1}{\eps_h^p} }_{L_\infty(\Omega)} &\leq 1, \quad p \geq 0,
\end{align*}
the first following from the Sobolev inequality and the second from noting that $1\leq\eps_h(\x)\leq \eps_\mathrm{max}$, $\x\in\Omega$, by \eqref{admissible}.  

Using these observations, and interpolation estimates
\begin{align*}
\abs{\rh\tilde\E} &\leq C\left(h\abs{ \aj{ \pd{\E_h}{\bnu} }{ s } } + \tau\abs{\aj{\pd{\E_h}{t}}{t}}\right),\\
\abs{\rh\tilde\l} &\leq C\left(h\abs{ \aj{ \pd{\l_h}{\bnu} }{ s } } + \tau\abs{\aj{\pd{\l_h}{t}}{t}}\right),\\
\abs{\div\rh\tilde\l} &\leq C\left(\abs{ \aj{ \pd{\l_h}{\bnu} }{ s } } + \tau\abs{\aj{\pd{\div\l_h}{t}}{t}}\right),
\end{align*}
we get
\begin{equation}\label{pfthm2_4}
\begin{aligned}
 \Theta_ 1 &\lesssim C\left(\sca{\frac{1}{\tau}\abs{\aj{\pd{\l_h}{t}}{t}}}{h\abs{ \aj{ \pd{\E_h}{\bnu} }{ s } } + \tau\abs{\aj{\pd{\E_h}{t}}{t}} }_\ot\right. \\&\qquad
 + \left. \norm{\grad\eps_h}_{L_\infty(\Omega)}\sca{\abs{\div\l_h} }{ h\abs{ \aj{ \pd{\E_h}{\bnu} }{ s } } + \tau\abs{\aj{\pd{\E_h}{t}}{t}} }_\ot \right.\\&\qquad
 + \left. \sca{\frac{1}{\tau}\abs{\aj{\pd{\E_h}{t}}{t}}}{ h\abs{ \aj{ \pd{\l_h}{\bnu} }{ s } } + \tau\abs{\aj{\pd{\l_h}{t}}{t}} }_\ot\right.\\&\qquad + \left. \norm{\grad\eps_h}_{L_\infty(\Omega)}\sca{\abs{\E_h}}{\abs{ \aj{ \pd{\l_h}{\bnu} }{ s } } + \tau\abs{\aj{\pd{\div\l_h}{t}}{t}}}_\ot \right)\norm{\eps - \eps_h}_{V^\eps} \\&
\leq Cc_\eps\eta\norm{\eps - \eps_h}_{V^\eps},
\end{aligned}
\end{equation}
where $c_\eps$ and $\eta$ were defined in the statement of the theorem.

Turning to $\Theta_2$ of \eqref{pfthm2_0}, we use the techniques of the proof of Theorem~\ref{ape_lag} to estimate
\begin{equation} \label{pfthm2_5}
\abs{\Theta_2} \lesssim C\sca{\abs{R_\eps}}{\abs{\eps-\eps_h}}_\Omega \leq C\norm{R_\eps}_\Omega\norm{\eps - \eps_h}_\Omega \leq C\norm{R_\eps}_\Omega\norm{\eps - \eps_h}_{V^\eps}.
\end{equation}

Combining estimates \eqref{pfthm2_4} and \eqref{pfthm2_5} with \eqref{pfthm2_00} and \eqref{pfthm2_0}, we conclude that
\begin{equation*}
\norm{\eps - \eps_h}_{V^\eps}^2 \lesssim C\left(c_\eps\eta\norm{\eps - \eps_h}_{V^\eps} + \norm{R_\eps}_\Omega\norm{\eps - \eps_h}_{V^\eps}\right),
\end{equation*}
and the result follows.
\end{proof}

Again, just as for the error estimate for the Lagrangian, we see that if the numerical errors for solving the direct and adjoint problems are relatively small, that is, when $\tilde u\approx u_h$ with relatively high accuracy, then $\norm{R_\eps}_\Omega$ dominates the error estimate.

\begin{corollary}(A posteriori error estimate for the Tikhonov functional.) \label{ape_tikh}
Under the hypothesis of Theorem~\ref{ape_norm}, we have
\begin{equation*}
\abs{F(\eps) - F(\eps_h)} \lesssim C\left(c_\eps^2\eta^2 + \norm{R_\eps}_\Omega^2\right),
\end{equation*}
with $c_\eps$ and $\eta$ as defined in Theorem~\ref{ape_norm}.
\end{corollary}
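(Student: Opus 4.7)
The plan is to derive Corollary~\ref{ape_tikh} directly from Theorem~\ref{ape_norm}, by exploiting the fact that $\eps$ is a minimizer of $F$ and using a second-order Taylor expansion. Since $\eps$ solves Problem~\ref{minlag}, the identity \eqref{dj2dl} shows that $F'(\eps;\,\oeps)=\pd{L}{\eps}(u_\eps;\,\oeps)=0$ for every admissible $\oeps$, so the first-order part of the expansion of $F$ around $\eps$ in the direction $\eps_h - \eps$ vanishes. A Taylor expansion therefore yields
\begin{equation*}
F(\eps_h) - F(\eps) = \tfrac{1}{2}F''(\eps;\,\eps_h - \eps,\,\eps_h - \eps) + o\!\left(\norm{\eps_h - \eps}_{V^\eps}^2\right).
\end{equation*}
Under the standing hypothesis of strong convexity \eqref{convexity} (which already supplies a two-sided quadratic control on $F$ near $\eps$, provided the smoothness of the maps $\eps\mapsto\E_\eps$ and $\eps\mapsto\l_\eps$ built into the formulation of the Lagrangian), the second Fr\'echet derivative of $F$ is bounded as a bilinear form on $V^\eps\times V^\eps$. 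Neglecting higher-order terms as elsewhere in the paper, this gives
\begin{equation*}
\abs{F(\eps) - F(\eps_h)} \lesssim C \norm{\eps_h - \eps}_{V^\eps}^2.
\end{equation*}

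Next I would invoke Theorem~\ref{ape_norm}, which provides $\norm{\eps - \eps_h}_{V^\eps} \lesssim C(c_\eps\eta + \norm{R_\eps}_\Omega)$. Squaring both sides and using the elementary inequality $(a+b)^2 \leq 2(a^2 + b^2)$ produces
\begin{equation*}
\norm{\eps - \eps_h}_{V^\eps}^2 \lesssim C\!\left(c_\eps^2\eta^2 + \norm{R_\eps}_\Omega^2\right),
\end{equation*}
and substituting this bound into the previous inequality yields the claimed estimate.

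The only genuine obstacle is the justification of the quadratic remainder in the Taylor step: we must be certain that the second-order term of the expansion of $F$ is indeed controlled by $\norm{\eps_h - \eps}_{V^\eps}^2$ rather than merely by $o(\norm{\eps_h - \eps}_{V^\eps})$. This is precisely the regime covered by the strong convexity assumption \eqref{convexity}, since that assumption encodes a nontrivial two-sided quadratic behaviour of $F$ in the neighbourhood $\mathcal{N}$ of the minimizer. Once this is granted, the remainder of the argument is purely algebraic manipulation of the bound in Theorem~\ref{ape_norm}.
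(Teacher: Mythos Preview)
Your approach differs from the paper's and contains a genuine gap. Strong convexity \eqref{convexity} gives only a \emph{lower} bound on the quadratic form associated with $F''$; it says nothing about an upper bound. Your assertion that it ``already supplies a two-sided quadratic control on $F$'' is incorrect, and you cannot deduce $\abs{F''(\eps;\,\cdot,\,\cdot)} \leq C\norm{\cdot}_{V^\eps}^2$ from \eqref{convexity} alone. To make your second-order Taylor argument work you would have to add a separate regularity hypothesis (boundedness of $F''$ on $\mathcal{N}$), which is plausible but is not among the hypotheses of Theorem~\ref{ape_norm}. You identify this as ``the only genuine obstacle'' but then resolve it by an appeal to convexity that does not hold.

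The paper sidesteps this by expanding $F$ to \emph{first} order around $\eps_h$ rather than to second order around $\eps$:
\[
F(\eps) - F(\eps_h) = F'(\eps_h;\,\eps - \eps_h) + o(\norm{\eps - \eps_h}_{V^\eps}) = \pd{L}{\eps}(\tilde u;\,\eps - \eps_h) + o(\norm{\eps - \eps_h}_{V^\eps}),
\]
using \eqref{dj2dl}. The quantity $\pd{L}{\eps}(\tilde u;\,\eps - \eps_h)$ is precisely what was estimated in the proof of Theorem~\ref{ape_norm} (it is $\Theta_1 + \Theta_2$ there), and that argument already yields
\[
\abs{\pd{L}{\eps}(\tilde u;\,\eps - \eps_h)} \lesssim C\bigl(c_\eps\eta + \norm{R_\eps}_\Omega\bigr)\norm{\eps - \eps_h}_{V^\eps}.
\]
One then applies Theorem~\ref{ape_norm} to the remaining factor $\norm{\eps - \eps_h}_{V^\eps}$ and squares. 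This route needs only first-order differentiability of $F$ and recycles the intermediate estimates of Theorem~\ref{ape_norm}; your route would indeed be shorter if an upper bound on $F''$ were available, but as written that step is unjustified.
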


\begin{proof}
Using the definition of the Fr\'echet derivative and \eqref{dj2dl} we get
\begin{align*}
F(\eps) - F(\eps_h) &= F'(\eps_h;\eps - \eps_h) + o(\norm{\eps - \eps_h}_{V^\eps})\\
                   &= \pd{L}{\eps}(\tilde u;\,\eps - \eps_h) + o(\norm{\eps - \eps_h}_{V^\eps}).
\end{align*}
Neglecting the remainder term as it is of higher order with respect to the error, and estimating $\pd{L}{\eps}(\tilde u;\,\eps - \eps_h)$ as in the proof of Theorem~\ref{ape_norm}, we obtain
\begin{align*}
\abs{F(\eps) - F(\eps_h)} \lesssim C(c_\eps\eta + \norm{R_\eps}_{V^\eps})\norm{\eps - \eps_h}_{V^\eps}.
\end{align*}
Applying Theorem~\ref{ape_norm} to estimate $\norm{\eps - \eps_h}_{V^\eps}$, we arrive at
\begin{align*}
\abs{F(\eps) - F(\eps_h)} \lesssim C(c_\eps\eta + \norm{R_\eps}_{V^\eps})^2\leq C\left(c_\eps^2\eta^2 + \norm{R_\eps}_{V^\eps}^2\right).
\end{align*}
\end{proof}

\section{Conclusion} \label{conclusion}
We have presented three a posteriori error estimates for an adaptive finite element method for the coefficient inverse problem, Problem \ref{cip}: for the Lagrangian, for the Tikhonov functional and for the coefficient. The latter two are presented here for the first time. Each estimator consists essentially of three parts, an estimate for the error resulting from finite element approximation of the solution to the direct problem, a similar estimate for the finite element approximation of the adjoint problem and an estimate corresponding to the approximation of the coefficient. The latter part is characterized by the residual $R_\eps$ in all estimates.

Explicit solution schemes and numerical testing, including the proper choice of regularization parameter $\alpha$, will be the subject of forthcoming papers.

\bibliographystyle{plainnat}
\bibliography{../mybibliography}

\end{document}